\title[On the interpolation constant in Orlicz spaces]
{On the interpolation constant for
subadditive operators in Orlicz spaces}
\author[A. Yu. Karlovich]{Alexei Yu. Karlovich}
\address{Departamento de Matem\'{a}tica,
Insituto Superior T\'{e}cnico, Av. Rovisco Pais 1,
1049-001 Lisbon, Portugal}
\email{akarlov@math.ist.utl.pt}
\author[L. Maligranda]{Lech Maligranda}
\address{Department of Mathematics, Lule{\aa} University of
Technology, SE\mbox-971 87 Lule{\aa}, Sweden}
\email{lech@sm.luth.se}
\thanks{The first author is supported by F.C.T. (Portugal) grants
SFRH/BPD/11619/2002 and FCT/FEDER/POCTI/MAT/59972/2004.}
\newtheorem{theorem}{Theorem}
\newtheorem{proposition}[theorem]{Proposition}
\newtheorem{lemma}[theorem]{Lemma}
\newtheorem{example}[theorem]{Example}
\newtheorem{remark}[theorem]{Remark}
\subjclass[2000]{46E30, 46E35, 46B70, 47B65.}
\keywords{subadditive operator, Orlicz space, $K$-functional,
interpolation constant, convex function, concave function.}
\begin{document}
\maketitle
\begin{abstract}
Let $1\le p<q\le\infty$ and let $T$ be a subadditive operator acting on
$L^p$ and $L^q$. We prove that $T$ is bounded on the Orlicz space
$L^\varphi$, where $\varphi^{-1}(u)=u^{1/p}\rho(u^{1/q-1/p})$ for
some concave function $\rho$ and
\[
\|T\|_{L^\varphi\to L^\varphi}\le C\max\{\|T\|_{L^p\to
L^p},\|T\|_{L^q\to L^q}\}.
\]
The interpolation constant $C$, in general, is less than $4$ and, in many
cases, we can give much better estimates for $C$. In particular, if $p=1$
and $q=\infty$, then the classical Orlicz interpolation theorem holds for
subadditive operators with the interpolation constant $C=1$. These results
generalize our results for linear operators obtained in \cite{KM01}.
\end{abstract}

\section{Introduction}
Let $(\Omega,\Sigma,\mu)$ be a complete $\sigma$-finite measure space.
A Banach lattice $X$ on $(\Omega,\Sigma,\mu)$ is a Banach space of
(equivalence classes of $\mu$-a.e. equal) real or complex-valued functions
on $\Omega$ such that if $|x(t)|\le|y(t)|$ $\mu$-a.e. where $y\in X$ and
$x$ is $\mu$-measurable, then $x\in X$ and $\|x\|_X\le \|y\|_X$. Lebesgue
and Orlicz spaces are examples of Banach lattices.

Let $\varphi:[0, \infty) \to [0, \infty]$ be
an {\it Orlicz function}, that is, a nondecreasing convex function such
that $\varphi(0) = 0$ and $\lim\limits_{u\to 0+}\varphi(u) = 0$ but not
identically zero or infinity on $(0,\infty)$. For a measurable real or
complex-valued function $x$, define a functional ({\it modular})
\begin{equation}\label{eq:modular}
I_\varphi(x)
:=
\int_\Omega\varphi(|x(t)|) \,d\mu(t)
=
\int_0^\infty\varphi(x^*(s)) \,ds,
\end{equation}
where $x^*$ is the non-increasing rearrangement of the function $x$. The second
equality in (\ref{eq:modular}) means that the modular is
rearrangement invariant.
The {\it Orlicz space} $L^\varphi=L^\varphi(\Omega,\Sigma,\mu)$ is the set of
all equivalence classes of $\mu$-measurable functions on $\Omega$ such that
$I_\varphi(\lambda x) < \infty$ for some $\lambda = \lambda(x) > 0 $.
This space is a Banach space with two norms: the {\it Luxemburg-Nakano norm}
\[
\|x\|_\varphi:=\inf\left\{\lambda>0\ : \ I_\varphi(x/\lambda)\leq 1
\right\},
\]
and the {\it Orlicz norm} (in the Amemiya form)
\[
\|x\|_\varphi^0:=\inf_{k>0}\frac{1}{k} [1+I_\varphi(kx)].
\]
Since
$\|x\|_\varphi = \inf\limits_{k>0}\frac{1}{k}\max\big\{1, I_\varphi(kx)\big\}$
it follows that
\[
\|x\|_\varphi\le \|x\|_\varphi^0\le 2\|x\|_\varphi.
\]
The Orlicz space $L^\varphi$ equipped with each of the above two norms is a
rearrangement-invariant space, sometimes also called a symmetric
space with the Fatou property. For general properties of Orlicz spaces
we refer to the books \cite{BS88,KR61,LT79,M89}.

It is well known that any two Banach lattices $X_0$ and $X_1$
on the same measure space $(\Omega, \Sigma, \mu)$ forms a Banach couple
$(X_0, X_1)$ in the sense of interpolation theory (see \cite[p.~42]{KPS82}).
The intersection $X_0\cap X_1$ and the sum $X_0+X_1$ of these two
spaces are also Banach lattices on $(\Omega,\Sigma,\mu)$ with the
standard norms
(cf. \cite{BS88,BL76,KPS82}). An operator $T$ mapping $X_0+X_1$ into itself
is said to be {\it subadditive} if for all $x,y\in X_0+X_1$,
\[
|T(x+y)(t)| \le |Tx(t)|+|Ty(t)|
\quad\mu\mbox{-a.e. on}\ \Omega.
\]
If, in addition, we have also that
$|T(\lambda x)(t)| = |\lambda|\,|Tx(t)|$ $\mu$-a.e. on $\Omega$ for
any $x\in X_0+X_1$
and any scalar $\lambda$, then the operator $T$ is called {\it sublinear}.

By $\mathcal{A}(X_0,X_1)$ we denote the class of all admissible operators,
i.e., subadditive operators $T:X_0+X_1\to X_0+X_1$ such that the restrictions
$T|_{X_i}: X_i\to X_i$ are bounded for $i=1,2$. Put
\[
M_i
:=
\|T|_{X_i}\|_{X_i\to X_i}
=
\sup_{x\in X_i, x\ne 0}\frac{\|Tx\|_{X_i}}{\|x\|_{X_i}},
\quad
M:=\max\{M_0,M_1\}.
\]
A Banach space $X$ is said to be an {\it interpolation space for subadditive
operators} between Banach lattices $X_0$ and $X_1$ on the same
measure space $(\Omega,\Sigma,\mu)$ if
\[
X_0\cap X_1\subset X\subset X_0+X_1
\]
with continuous embeddings, every operator $T\in\mathcal{A}(X_0,X_1)$
maps $X$ into itself, and
\[
\|T\|_{X\to X}\le CM.
\]
The constant $C$ is called an {\it interpolation constant}.

A typical example of a subadditive operator which is not linear
is the Hardy-Littlewood maximal operator. It is well known that
this operator is bounded on all Lebesgue spaces $L^p(\mathbb{R}^n)$,
$1<p\le\infty$. Its operator norm on $L^p(\mathbb{R})$, $1<p<\infty$,
was calculated only recently by Grafakos and Mongomery-Smith \cite{GMS97}
(for dimensions $n\ge 2$, the problem is still open).
A natural question about generalizations of those results to
more general, for instance, Orlicz spaces, arises. This question
was our particular motivation for the present work.

The aim of this paper is to study the interpolation of subadditive
operators from the couple of Lebesgue spaces $L^p$ and $L^q$ to
an Orlicz space with the special attention to the interpolation
constant $C$. The corresponding problem for linear operators
was considered in our paper \cite{KM01}.
The interpolation of sublinear and subadditive operators between general
Banach lattices and for the Lions-Peetre real K-method and the
Calder{\'o}n complex method was considered in \cite{M89-CM}.
Despite a vast number
of works on the interpolation of Orlicz spaces and their generalizations,
only a few of them took care upon good estimates for the interpolation
constant. We will not go into details here but refer to historical remarks
in \cite{KM01} and \cite[Chapter~14]{M89}.

A function $\rho:[0,\infty)\to[0,\infty)$ is said to be {\it quasi-concave}
if it is continuous and positive on $\mathbb{R}_+:=(0,\infty)$ and
\[
\rho(s) \le \max (1,s/t) \rho(t)
\quad\mbox{for all}\quad s,t>0.
\]
Let $\mathcal{P}$ be the set of all quasi-concave functions and let
$\widetilde{\mathcal{P}}$ denote the subset of all concave functions
in $\mathcal{P}$. Note that if $\rho\in\mathcal{P}$, then its {\it concave
majorant} $\widetilde{\rho}$ defined by
\[
\widetilde{\rho}(t):=\inf_{s>0}\left(1+\frac{t}{s}\right)\rho(s)
\]
belongs to $\widetilde{\mathcal{P}}$ and
\begin{equation}\label{eq:conc-maj}
\rho(t)\le \widetilde{\rho}(t) \le 2\rho(t)
\quad\mbox{for all}\quad t>0.
\end{equation}
The constant $2$ is best possible.

Clearly, if $\theta\in(0,1)$, then
$\rho(t)=t^\theta$ belongs to $\widetilde{\mathcal{P}}$. Let us give a
nontrivial example of a function in $\widetilde{\mathcal{P}}$.
For $0 < \theta < 1$ and $a, b \in \mathbb{R}$, let $\rho(0) = 0$ and
$\rho(t) = t^{\theta}[\ln (e + t)]^{a} [\ln (e + 1/t)]^{b}$
for $t>0$. Then $\rho \in \widetilde{\mathcal{P}}$.

Following Gustavsson and Peetre
\cite{GP77} (see also \cite[Chap.~14]{M89}
and \cite{KM01}), we suppose that
\[
\varphi^{-1}(u)=u^{1/p}\rho(u^{1/q-1/p})
\quad\mbox{for all}\quad
u>0,
\]
where $1\le p<q\le\infty$ and $\rho\in\widetilde{\mathcal{P}}$. In this case
$\varphi$ is convex and the (well defined) Orlicz space $L^\varphi$
is an intermediate space between $L^p$ and $L^q$, that is,
\[
L^p\cap L^q\subset L^\varphi\subset L^p+L^q
\]
with both embeddings being continuous (see, e.g. \cite[Lemma~14.2]{M89}).

The paper is organized as follows. Section~\ref{sec:Peetre} contains
some information on the Peetre $L$-functional defined for a couple of
Banach lattices. In Section~\ref{sec:interpolation-limiting}, we study
the limiting case of the interpolation between $L^p$ and $L^\infty$.
The proof of the interpolation theorem is based on the Kr\'ee formula
and the Hardy-Littlewood-P\'olya majorization theorem.
In Section~\ref{sec:interpolation-generic}, we embark on the generic
case of the interpolation between $L^p$ and $L^q$ whenever $1\le p<q<\infty$.
Our approach goes back to Peetre \cite{Peetre70}. The sharp estimate
for the modified $L$-functional of the couple $(L^p,L^q)$ due to
Sparr \cite{Sparr78} is the main ingredient of our proof. For completeness
we also formulate a known interpolation theorem (see \cite{KM01})
for linear operators in Section~\ref{sec:interpolation-linear}. It gives
a slightly better estimate for the interpolation constant in the case
$1<p<q<\infty$ because in this case one can employ duality arguments.
\section{Peetre $L$-functional on Banach lattices}
\label{sec:Peetre}
The following Peetre $L$-functional plays a significant role in the real
method of interpolation theory (see \cite{Peetre70} and also
\cite{BRR91, Sparr78}). It is defined for $0<p,q,t<\infty$ and
for $x\in X_0+X_1$ (where $X_0$ and $X_1$ are arbitrary Banach spaces,
not necessarily having a lattice structure) by
\[
K_{p,q}(t,x;X_0,X_1):=
\inf\big\{\|x_0\|_{X_0}^p + t\|x_1\|_{X_1}^q :\
x = x_0+x_1,\
x_0 \in X_0,\
x_1\in X_1\big\}.
\]
In the case when $p = q = 1$ this $L$-functional is the classical
Peetre $K$-functional, which we shortly denote by $K(t,x;X_0,X_1)$.
\begin{proposition}\label{pr:functional-subadditive}
Let $0<p,q<\infty$ and let $X_0,X_1$ be {\rm(}real or complex{\rm\,)}
Banach lattices on a complete
$\sigma$-finite measure space $(\Omega,\Sigma,\mu)$. If $t>0$ and
$x\in X_0+X_1$,
then $K_{p,q}(t,x;X_0,X_1)$ is equal to
\[
\inf\big\{\|x_0\|_{X_0}^p+t\|x_1\|_{X_1}^q:\
|x|\le x_0+x_1,\
0 \leq x_0\in X_0,\
0\le x_1\in X_1\big\}.
\]
\end{proposition}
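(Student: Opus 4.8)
The plan is to prove the two infima are equal by establishing inequalities in both directions. Let me denote by $A$ the original $L$-functional $K_{p,q}(t,x;X_0,X_1)$ and by $B$ the modified infimum over decompositions $|x|\le x_0+x_1$ with $x_0,x_1\ge 0$.

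First I would show $B \le A$. This is the easy direction: any genuine decomposition $x = x_0 + x_1$ with $x_0\in X_0$, $x_1\in X_1$ yields $|x| \le |x_0| + |x_1|$, and since $X_0, X_1$ are Banach lattices we have $|x_0|\in X_0$ with $\||x_0|\|_{X_0} = \|x_0\|_{X_0}$, and likewise for $x_1$. So $(|x_0|, |x_1|)$ is admissible for the modified problem with the same value of the functional, whence $B\le A$.

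The reverse inequality $A \le B$ is the main point. Suppose $|x| \le x_0 + x_1$ with $0\le x_0\in X_0$ and $0\le x_1\in X_1$. I would like to produce a genuine decomposition $x = y_0 + y_1$ with $|y_i|\le x_i$, so that $\|y_i\|_{X_i}\le\|x_i\|_{X_i}$ by the lattice property and the value does not increase. The natural choice is to split $x$ proportionally to $x_0$ and $x_1$ on the set where $x_0+x_1>0$: set
\[
y_0(t) := x(t)\,\frac{x_0(t)}{x_0(t)+x_1(t)},
\qquad
y_1(t) := x(t)\,\frac{x_1(t)}{x_0(t)+x_1(t)}
\]
for $t$ with $x_0(t)+x_1(t)>0$, and $y_0(t)=y_1(t)=0$ otherwise. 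On the null set one has $|x(t)|\le x_0(t)+x_1(t)=0$, so $x(t)=0$ too and $x=y_0+y_1$ holds $\mu$-a.e. Moreover $|y_i(t)| = |x(t)|\,x_i(t)/(x_0(t)+x_1(t)) \le (x_0(t)+x_1(t))\cdot x_i(t)/(x_0(t)+x_1(t)) = x_i(t)$, so $|y_i|\le x_i$ pointwise; the lattice axiom then gives $y_i\in X_i$ and $\|y_i\|_{X_i}\le\|x_i\|_{X_i}$. Hence $\|y_0\|_{X_0}^p + t\|y_1\|_{X_1}^q \le \|x_0\|_{X_0}^p + t\|x_1\|_{X_1}^q$, and taking the infimum over all modified decompositions yields $A\le B$.

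The one technical point to check carefully — and the only place where anything could go wrong — is measurability of $y_0$ and $y_1$: the functions $x_0/(x_0+x_1)$ and $x_1/(x_0+x_1)$ are measurable on $\{x_0+x_1>0\}$ as quotients of measurable functions with nonvanishing denominator, so $y_0, y_1$ are measurable, and the complex-valued case is handled identically since the multipliers are real scalars in $[0,1]$. Combining $B\le A$ and $A\le B$ completes the proof.
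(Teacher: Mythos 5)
Your proof is correct, and it is organized more economically than the paper's. The paper proceeds in three stages: it first proves $K_{p,q}(t,x;X_0,X_1)=K_{p,q}(t,|x|;X_0,X_1)$ by writing $x=|x|e^{i\theta}$ and transporting decompositions back and forth with the phase factor; it then shows that an arbitrary (signed or complex) exact decomposition $|x|=x_0+x_1$ can be replaced by one with nonnegative parts, via an explicit case analysis on the sets $A_1,A_2,A_3$ where $\mathrm{Re}\,x_0$ and $\mathrm{Re}\,x_1$ take various signs; and only at the last step does it use the proportional split $y_i=x_i|x|/(x_0+x_1)$ to pass from the inequality $|x|\le x_0+x_1$ to an exact decomposition of $|x|$. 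You collapse all of this into the two inequalities $B\le A$ and $A\le B$: the first follows from the trivial bound $|x|\le|x_0|+|x_1|$ together with $\||x_i|\|_{X_i}=\|x_i\|_{X_i}$, and the second from applying the proportional split directly to $x$ rather than to $|x|$, namely $y_i=x\,x_i/(x_0+x_1)$. Because the multipliers $x_i/(x_0+x_1)$ are real scalars in $[0,1]$, this yields a genuine decomposition $x=y_0+y_1$ with $|y_i|\le x_i$ in one stroke, so neither the phase-factor reduction nor the sign case analysis is needed; your handling of the null set $\{x_0+x_1=0\}$ and of measurability is also correct. The only thing the longer route buys is that it records, as intermediate facts, the identity $K_{p,q}(t,x)=K_{p,q}(t,|x|)$ and the coincidence of the three decomposition classes $S_1=S_2=S_3$, whereas your argument goes straight to the stated proposition.
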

\begin{proof}
If $x=x_0+x_1$ with $x_0\in X_0$ and $x_1\in X_1$, then
\[
|x|=x_0e^{-i\theta}+x_1e^{-i\theta}=y_0+y_1,
\]
where $\theta:\Omega\to\mathbb{R}$, and
\[
K_{p,q}(t,|x|;X_0,X_1)
\le
\|y_0\|_{X_0}^p+t\|y_1\|_{X_1}^q
=
\|x_0\|_{X_0}^p+t\|x_1\|_{X_1}^q.
\]
Hence
\begin{equation}\label{eq:fs-1}
K_{p,q}(t,|x|;X_0,X_1)\le K_{p,q}(t,x;X_0,X_1).
\end{equation}
Similarly, if $|x|=x_0+x_1$ with $x_0\in X_0$ and $x_1\in X_1$, then
\[
x=|x| e^{i\theta}= x_0e^{i\theta} +x_1e^{i\theta}=y_0+y_1,
\]
and
\[
K_{p,q}(t,x;X_0,X_1)
\le
\|y_0\|_{X_0}^p+t\|y_1\|_{X_1}^q
=
\|x_0\|_{X_0}^p+t\|x_1\|_{X_1}^q,
\]
from which we obtain the estimate
\begin{equation}\label{eq:fs-2}
K_{p,q}(t,x;X_0,X_1)\le K_{p,q}(t,|x|;X_0,X_1).
\end{equation}
Combining (\ref{eq:fs-1}) and (\ref{eq:fs-2}), we arrive at
\begin{equation}\label{eq:fs-3}
K_{p,q}(t,x;X_0,X_1)= K_{p,q}(t,|x|;X_0,X_1).
\end{equation}
Now let $|x|=x_0+x_1=\mathrm{Re}\,x_0+\mathrm{Re}\,x_1$,
where $x_0\in X_0$ and $x_1\in X_1$. Consider the sets
\begin{eqnarray*}
A_1&:=&
\big\{t\in\Omega:\ \mathrm{Re}\,x_0(t)\ge 0, \ \mathrm{Re}\,x_1(t)\ge 0\big\},
\\
A_2&:=&
\big\{t\in\Omega:\ \mathrm{Re}\,x_0(t)\ge 0, \ \mathrm{Re}\,x_1(t)< 0\big\},
\\
A_3&:=&
\big\{t\in\Omega:\ \mathrm{Re}\,x_0(t)< 0, \ \mathrm{Re}\,x_1(t)\ge 0\big\}.
\end{eqnarray*}
Put
\[
x_0'(t):=\left\{\begin{array}{lll}
\mathrm{Re}\,x_0(t) &\mbox{if} &t\in A_1,\\
\mathrm{Re}\, x_0(t)+\mathrm{Re}\, x_1(t) &\mbox{if} &t\in A_2,\\
0 &\mbox{if} & t\in\Omega\setminus(A_1\cup A_2),
\end{array}\right.
\]
\[
x_1'(t):=\left\{\begin{array}{lll}
\mathrm{Re}\,x_1(t) &\mbox{if} &t\in A_1,\\
\mathrm{Re}\, x_0(t)+\mathrm{Re}\, x_1(t) &\mbox{if} &t\in A_3,\\
0 &\mbox{if} & t\in\Omega\setminus(A_1\cup A_3).
\end{array}\right.
\]

Since $\Omega\setminus(A_1\cup A_2) = A_{3}$ and
$\Omega\setminus(A_1\cup A_3)=A_{2}$
(here we do not distinguish sets differing by a set of $\mu$-measure zero)
it follows that $|x|=x_0'+x_1'$ and
$0\le x_i'\le|\mathrm{Re}\, x_i|\le |x_i|$ for $i=0,1$.
Thus the sets
\begin{eqnarray*}
S_1 &:=&\big\{x\in X_0+X_1:\ |x|=x_0+x_1, \ x_0\in X_0, \ x_1\in X_1\big\},
\\
S_2 &:=&
\big\{x\in X_0+X_1:\ |x|=x_0+x_1, \ 0\le x_0\in X_0, \ 0\le x_1\in X_1\big\}
\end{eqnarray*}
coincide. If $x\in X_0+X_1$ is such that $|x|\le x_0+x_1$ with
$0\le x_0\in X_0$ and $0\le x_1\in X_1$, then for $i=0,1$ put
\[
y_i:=\left\{\begin{array}{lll}
\frac{x_i|x|}{x_0+x_1} &\mbox{if}& x_0+x_1>0,\\
0                      &\mbox{if}& x_0+x_1=0.
\end{array}\right.
\]
In that case $|x|=y_0+y_1$ and $0\le y_i\le x_i$. Hence the set
\[
S_3: =
\big\{x\in X_0+X_1:\ |x|\le x_0+x_1, \ 0\le x_0\in X_0, \ 0\le x_1\in X_1\big\}
\]
coincides with $S_2=S_1$. Thus
\begin{eqnarray}
\label{eq:fs-4}
K_{p,q}(t,|x|;X_0,X_1)
&=&
\inf_{x\in S_1}\big(\|x_0\|_{X_0}^p+t\|x_1\|_{X_1}^q\big)
\\
\nonumber
&=&
\inf_{x\in S_3}\big(\|x_0\|_{X_0}^p+t\|x_1\|_{X_1}^q\big).
\end{eqnarray}
We finish the proof combining (\ref{eq:fs-3}) and (\ref{eq:fs-4}).
\end{proof}

The above statement allows us to study admissible subadditive operators
on Banach lattices.
\begin{proposition}\label{pr:K-bound}
Let $0<p,q<\infty$ and let $X_0,X_1$ be {\rm(}real or complex{\rm\,)}
Banach lattices on a complete
$\sigma$-finite measure space $(\Omega,\Sigma,\mu)$. Suppose
$T\in{\mathcal{A}}(X_0,X_1)$ and $x\in X_0+X_1$. Then
\[
K_{p,q}\left(t,\frac{Tx}{M};X_0,X_1\right) \le K_{p,q}(t,x;X_0,X_1)
\quad\mbox{for all}\quad t>0.
\]
\end{proposition}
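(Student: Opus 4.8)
The plan is to fix $t>0$ and $x\in X_0+X_1$, take an \emph{arbitrary} decomposition $x=x_0+x_1$ with $x_0\in X_0$ and $x_1\in X_1$, and push it through the operator $T$. Since $T$ is subadditive, we have the pointwise estimate $|Tx(s)|=|T(x_0+x_1)(s)|\le|Tx_0(s)|+|Tx_1(s)|$ for $\mu$-almost all $s\in\Omega$. Because $T$ restricts to a bounded operator on each $X_i$, the function $Tx_i$ belongs to $X_i$, and since $X_i$ is a Banach lattice, so does the nonnegative function $z_i:=|Tx_i|/M$; moreover
\[
\|z_i\|_{X_i}=\frac{\|Tx_i\|_{X_i}}{M}\le\frac{M_i}{M}\,\|x_i\|_{X_i}\le\|x_i\|_{X_i}
\qquad(i=0,1).
\]

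Dividing the pointwise estimate above by $M$ gives $|Tx/M|\le z_0+z_1$ with $0\le z_0\in X_0$ and $0\le z_1\in X_1$. This is exactly a competitor of the kind admitted in the reformulation of the $L$-functional provided by Proposition~\ref{pr:functional-subadditive}; applying that proposition to the element $Tx/M\in X_0+X_1$ yields
\[
K_{p,q}\!\left(t,\frac{Tx}{M};X_0,X_1\right)\le\|z_0\|_{X_0}^p+t\,\|z_1\|_{X_1}^q\le\|x_0\|_{X_0}^p+t\,\|x_1\|_{X_1}^q.
\]
Passing to the infimum over all decompositions $x=x_0+x_1$ on the right-hand side gives the asserted inequality.

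The step I expect to be the real content is the appeal to Proposition~\ref{pr:functional-subadditive}: subadditivity only delivers a pointwise \emph{inequality} $|Tx|\le|Tx_0|+|Tx_1|$ for the modulus of $Tx$, not a genuine splitting of $Tx$ into an $X_0$-summand and an $X_1$-summand, and the natural candidates $|Tx_i|/M$ are moreover nonnegative; it is precisely Proposition~\ref{pr:functional-subadditive} that shows such data still controls the ordinary $L$-functional $K_{p,q}(t,Tx/M;X_0,X_1)$. Once that is granted, the remaining estimate is a one-line computation. (Here $M>0$ is implicit; if $M=0$, then $Tx_i=0$ for every $x_i\in X_i$, so subadditivity forces $Tx=0$ and there is nothing to prove.)
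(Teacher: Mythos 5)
Your proof is correct and follows essentially the same route as the paper: decompose $x=x_0+x_1$, use subadditivity to get the pointwise bound $|Tx|/M\le|Tx_0|/M+|Tx_1|/M$, invoke Proposition~\ref{pr:functional-subadditive} to turn this pointwise majorization into an upper bound for $K_{p,q}(t,Tx/M;X_0,X_1)$, and take the infimum over decompositions. You have also correctly identified the appeal to Proposition~\ref{pr:functional-subadditive} as the essential step that compensates for the lack of a genuine splitting of $Tx$.
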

\begin{proof}
The proof is standard. If $x=x_0+x_1$ is any decomposition of $x\in X_0+X_1$
such that $x_0\in X_0$ and $x_1\in X_1$, then taking into account that $T$
is subadditive, we have
\[
\frac{|Tx|}{M}\le\frac{|Tx_0|}{M}+\frac{|Tx_1|}{M}.
\]
{From} Proposition~\ref{pr:functional-subadditive} it follows that
\begin{eqnarray*}
K_{p,q}\left(t,\frac{Tx}{M};X_0,X_1\right)
&\le&
\left\|\frac{|Tx_0|}{M}\right\|_{X_0}^p+\left\|\frac{|Tx_1|}{M}\right\|_{X_1}^q
\\
&\le&
\left(\frac{M_0}{M}\right)^p\|x_0\|_{X_0}^p+t\left(\frac{M_1}{M}\right)^q\|x_1\|_{X_1}^q
\\
&\le&
\|x_0\|_{X_0}^p+t\|x_1\|_{X_1}^q.
\end{eqnarray*}
Taking the infimum over all $x\in X_0+X_1$ such that $x_0\in X_0$ and
$x_1\in X_1$,
we arrive at the desired inequality.
\end{proof}
\section{Interpolation between $L^p$ and $L^\infty$}
\label{sec:interpolation-limiting}
Our first main result is the following interpolation theorem.
\begin{theorem}\label{th:int-c}
Suppose $1 \leq p < \infty$.
\begin{enumerate}
\item[{\rm (a)}]
If $\psi(u)=\varphi(u^{1/p})$ is a convex function and
$T\in\mathcal{A}(L^p,L^\infty)$, then
\[
I_\varphi\left(\frac{Tx}{2^{1-1/p}M}\right)
\le
I_\varphi(x)\quad\mbox{for all}\quad x\in L^p+ L^\infty.
\]

\item[{\rm (b)}] If $\varphi^{-1}(u) = u^{1/p}\rho(u^{-1/p})$ with
$\rho\in\widetilde{\mathcal{P}}$ such that
$\rho_*(\mathbb{R}_+)=\mathbb{R}_+$, where $\rho_*(t):=t\rho(1/t)$,
then the Orlicz space $L^\varphi$ {\rm(}with both, the Luxemburg-Nakano
and the Orlicz norm\,{\rm)} is an interpolation space for
subadditive operators between $L^p$ and $L^\infty$, and
\[
\|T\|_{L^\varphi\to L^\varphi}
\le C\max
\big\{
\|T\|_{L^p\to L^p}, \|T\|_{L^\infty\to L^\infty}
\big\}
\]
for any $T\in\mathcal{A}(L^p,L^\infty)$, where $C\le 2^{1-1/p}$.
\end{enumerate}
\end{theorem}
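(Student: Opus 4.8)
The plan is to funnel both parts through a single Kr\'ee-type submajorization estimate, which I would prove first:
\[
\int_0^t (Tx)^*(s)^p\,ds\ \le\ 2^{p-1}M^p\int_0^t x^*(s)^p\,ds\qquad\text{for all }t>0,\quad T\in\mathcal{A}(L^p,L^\infty),\ x\in L^p+L^\infty .
\]
To establish it, fix $t>0$ and rewrite, via the Kr\'ee formula $K(t,g;L^1,L^\infty)=\int_0^t g^*(s)\,ds$ and the identity $(|f|^p)^*=(f^*)^p$, the left-hand side as $K\big(t,|Tx|^p;L^1,L^\infty\big)$. For any decomposition $x=x_0+x_1$ with $x_0\in L^p$, $x_1\in L^\infty$, subadditivity of $T$ and $(a+b)^p\le 2^{p-1}(a^p+b^p)$ give $|Tx|^p\le 2^{p-1}|Tx_0|^p+2^{p-1}|Tx_1|^p$, where $2^{p-1}|Tx_0|^p\in L^1$ has norm $2^{p-1}\|Tx_0\|_{L^p}^p\le 2^{p-1}M^p\|x_0\|_{L^p}^p$ and $2^{p-1}|Tx_1|^p\in L^\infty$ has norm $\le 2^{p-1}M^p\|x_1\|_{L^\infty}^p$, so by Proposition~\ref{pr:functional-subadditive} applied to the couple $(L^1,L^\infty)$,
\[
K\big(t,|Tx|^p;L^1,L^\infty\big)\ \le\ 2^{p-1}M^p\big(\|x_0\|_{L^p}^p+t\|x_1\|_{L^\infty}^p\big).
\]
Choosing the truncation decomposition $x_1=\mathrm{sgn}(x)\min\{|x|,x^*(t)\}$, $x_0=x-x_1$ (so $\|x_1\|_{L^\infty}=x^*(t)$ and $\|x_0\|_{L^p}^p=\int_0^t(x^*(s)-x^*(t))^p\,ds$) and using the elementary inequality $(a-b)^p+b^p\le a^p$ for $0\le b\le a$, $p\ge1$, one gets $\|x_0\|_{L^p}^p+t\,x^*(t)^p\le\int_0^t x^*(s)^p\,ds$, and the estimate follows.

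Part (a) then drops out: dividing the estimate by $2^{p-1}M^p=(2^{1-1/p}M)^p$ and using $(Tx/c)^*=(Tx)^*/c$, the nonincreasing function $s\mapsto\big((Tx/(2^{1-1/p}M))^*(s)\big)^p$ is submajorized by the nonincreasing function $s\mapsto x^*(s)^p$. Since $\psi(u)=\varphi(u^{1/p})$ is convex, it is nonnegative, nondecreasing and vanishes at $0$, so the Hardy--Littlewood--P\'olya majorization theorem gives
\[
\int_0^\infty\psi\!\Big(\big((Tx/(2^{1-1/p}M))^*(s)\big)^p\Big)ds\ \le\ \int_0^\infty\psi\big(x^*(s)^p\big)ds ,
\]
and since $\psi(u^p)=\varphi(u)$ this is exactly $I_\varphi\big(Tx/(2^{1-1/p}M)\big)\le I_\varphi(x)$.

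For part (b) I would first record that the hypothesis $\varphi^{-1}(u)=u^{1/p}\rho(u^{-1/p})$, $\rho\in\widetilde{\mathcal{P}}$, forces $\psi(u)=\varphi(u^{1/p})$ to be a convex Orlicz function: since $\psi^{-1}(v)=\varphi^{-1}(v)^p=\rho_*\big(v^{1/p}\big)^p$ with $\rho_*(\tau)=\tau\rho(1/\tau)\in\widetilde{\mathcal{P}}$, concavity of $\psi^{-1}$ follows from the calculus of $\widetilde{\mathcal{P}}$ (cf.\ \cite[Ch.~14]{M89}); moreover $L^p\cap L^\infty\subset L^\varphi\subset L^p+L^\infty$ with continuous embeddings by \cite[Lemma~14.2]{M89}, so $Tx$ is defined for $x\in L^\varphi$. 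Now, for each $\lambda>0$, dividing the Kr\'ee estimate by $(2^{1-1/p}M\lambda)^p$ and applying the majorization theorem with $\psi$ exactly as in part (a) yields
\[
I_\varphi\!\left(\frac{Tx}{2^{1-1/p}M\lambda}\right)\ \le\ I_\varphi\!\left(\frac{x}{\lambda}\right)\qquad\text{for every }\lambda>0 .
\]
For the Luxemburg--Nakano norm, taking $\lambda>\|x\|_\varphi$ makes the right-hand side $\le1$, hence $\|Tx\|_\varphi\le 2^{1-1/p}M\lambda$, and letting $\lambda\downarrow\|x\|_\varphi$ gives $\|Tx\|_\varphi\le 2^{1-1/p}M\|x\|_\varphi$. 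For the Orlicz norm, put $\lambda=(2^{1-1/p}Mk)^{-1}$ in the displayed inequality and then substitute $m=2^{1-1/p}Mk$ in the Amemiya formula:
\[
\|Tx\|_\varphi^0=\inf_{k>0}\frac1k\big[1+I_\varphi(kTx)\big]\ \le\ \inf_{k>0}\frac1k\big[1+I_\varphi(2^{1-1/p}Mkx)\big]=2^{1-1/p}M\,\|x\|_\varphi^0 .
\]
Thus $T$ maps $L^\varphi$ into itself in both norms with norm $\le 2^{1-1/p}M$, so $L^\varphi$ is an interpolation space for subadditive operators between $L^p$ and $L^\infty$ with $C\le 2^{1-1/p}$.

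The hard part is that $T$ is only subadditive, not positively homogeneous, so one cannot normalize $x$ to the unit sphere and rescale; this is why the scaling parameter $\lambda$ must be carried all the way inside the submajorization estimate before the majorization theorem is invoked, rather than being applied to a single fixed modular inequality. Two further technical points deserve care: verifying that the Gustavsson--Peetre-type hypothesis on $\varphi^{-1}$ really produces a \emph{convex} $\psi$ (this is exactly what makes the Hardy--Littlewood--P\'olya theorem available in the form used), and checking that the constant comes out exactly $2^{1-1/p}$, which hinges on pairing the crude bound $|Tx|^p\le 2^{p-1}(|Tx_0|^p+|Tx_1|^p)$ with the \emph{sharp} inequality $(a-b)^p+b^p\le a^p$ so that no extra factor is lost in passing between the $K$-functional of $(L^1,L^\infty)$ and the Orlicz modular.
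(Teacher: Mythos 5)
Your proof is correct, and its skeleton is the same as the paper's: everything is funneled through the submajorization inequality $\int_0^t (Tx)^*(s)^p\,ds\le 2^{p-1}M^p\int_0^t x^*(s)^p\,ds$ followed by the Hardy--Littlewood--P\'olya theorem applied to the convex function $\psi$. The difference is in how that inequality is obtained: the paper simply quotes the Kr\'ee formula $\bigl(\int_0^t x^*{}^p\bigr)^{1/p}\le K(t^{1/p},x;L^p,L^\infty)\le 2^{1-1/p}\bigl(\int_0^t x^*{}^p\bigr)^{1/p}$ and combines it with Proposition~\ref{pr:K-bound}, whereas you rederive both halves from scratch --- the upper bound via $K(t,\cdot\,;L^1,L^\infty)$, the crude inequality $(a+b)^p\le 2^{p-1}(a^p+b^p)$ and Proposition~\ref{pr:functional-subadditive}, and the lower bound via the truncation decomposition and $(a-b)^p+b^p\le a^p$. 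This makes the argument self-contained at the cost of length, and yields the same constant. A second, genuinely valuable difference is in part (b): you correctly observe that for a merely subadditive (non-homogeneous) $T$ one cannot pass from the single modular inequality to the norm inequality by rescaling $x$, since $T(x/\lambda)\ne Tx/\lambda$, and you fix this by carrying the parameter $\lambda$ inside the rearrangement-level inequality before invoking majorization; the paper states only the unscaled modular estimate and asserts the norm bounds without comment, so your treatment fills in a step the paper leaves implicit. The one place where you, like the paper, defer to an external fact is the convexity of $\psi(u)=\varphi(u^{1/p})$ under the hypothesis on $\varphi^{-1}$ (the paper cites \cite[Lemma~3.2(d)]{KM01}); your sketch via concavity of $\psi^{-1}(v)=\rho_*(v^{1/p})^p$ is the right idea but is not actually carried out.
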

\begin{proof}  (a) The proof is developed by analogy with the proof
of \cite[Theorem~4.2(b)]{KM01}. For all $x\in L^p+L^\infty$ and $t>0$,
according to the Kr\'ee formula (see \cite[Theorem~5.2.1]{BL76}), we have
\begin{eqnarray}\label{eq:mod-infty-2}
&&
\left(\int_0^t x^*(s)^p ds \right)^{1/p}
\le K(t^{1/p}, x; L^p,L^\infty) \le 2^{1-1/p}
\left( \int_0^t x^*(s)^p ds \right)^{1/p}
\!\!\!.
\end{eqnarray}
Notice that the constant $2^{1-1/p}$ on the right-hand side is best possible
(see Bergh \cite{Bergh73}). Due to Proposition~\ref{pr:K-bound},
\begin{equation}\label{eq:mod-infty-3}
K\left(t,\frac{Tx}{M};L^p,L^\infty\right) \le K(t,x;L^p,L^\infty)
\quad\mbox{for all}\quad t>0.
\end{equation}
  {From} (\ref{eq:mod-infty-2}) and (\ref{eq:mod-infty-3}) we obtain that
\[
\int_0^t\left(\frac{(Tx)^*(s)}{2^{1-1/p}M}\right)^pds \le \int_0^t
x^*(s)^p\,ds
\quad\mbox{for all}\quad t>0.
\]
Since $\psi(u)=\varphi(u^{1/p})$ is convex it follows, by the
Hardy-Littlewood-P\'olya majorization theorem (see, e.g. \cite[p.~88]{BS88})
and equality (\ref{eq:modular}), that
\begin{eqnarray*}
\int_\Omega
\varphi\left(\frac{|Tx(t)|}{2^{1-1/p}M} \right)\,d\mu(t)
&=&
\int_0^\infty
\varphi\left(\frac{(Tx)^*(s)}{2^{1-1/p}M} \right)\,ds\\
&=&
\int_0^\infty \psi\left( \left[ \frac{(Tx)^*(s)}{2^{1-1/p}M}
\right]^p \right)\,ds \\
&\leq&
\int_0^\infty \psi(x^*(s)^p)\,ds\\
&=&
\int_0^\infty \varphi(x^*(s))\,ds \\
&=&
\int_{\Omega}
\varphi\left( |x(t)| \right)\,d\mu(t),
\end{eqnarray*}
and this is a desired statement. Part (a) is proved.

\medskip
\noindent
(b) It is possible to prove that if
$\varphi^{-1}(u) = u^{1/p}\rho(u^{-1/p})$ then the function
$\psi(u)=\varphi(u^{1/p})$ is convex (cf. \cite[Lemma 3.2(d)]{KM01} for
details). Hence, by part (a), we obtain the modular estimate
\[
I_\varphi\left(\frac{Tx}{2^{1-1/p}M}\right)\le I_\varphi(x)
\quad\mbox{for all}\quad x\in L^\varphi,
\]
which implies
\[
\|Tx\|_\varphi\le 2^{1-1/p}M\|x\|_\varphi \quad {\rm and} \quad
\|Tx\|^0_\varphi\le
2^{1-1/p}M\|x\|^0_\varphi
\]
for all $x\in L^\varphi$.
\end{proof}

Theorem~\ref{th:int-c} was proved for linear operators in our paper
\cite{KM01}. In the case $p=1$, Theorem~\ref{th:int-c}(b) generalizes
the well-known Orlicz interpolation theorem to subadditive operators.
Orlicz proved it in 1934 for linear operators and with certain constant $C>1$.
{From} the Calder\'on-Mitjagin interpolation theorem (see, e.g.
\cite[Chap.~2, Theorem~4.9]{KPS82}) it follows that, in fact, the
interpolation constant is equal to $1$. A simple proof of the Orlicz
interpolation theorem for linear and for Lipschitz operators with the
interpolation constant $1$, together with its applications, was given by
one of the authors \cite{M89-Studia} (see also \cite{M89}). Lorentz and
Shimogaki \cite[Theorem~7]{LS71} observed that if $1\le p<\infty$,
$\varphi(u)=\int_0^u(u-t)^p\,dm(t)$, where $m:\mathbb{R}_+\to\mathbb{R}_+$
is an increasing function, and $T\in\mathcal{A}(L^p,L^\infty)$ is a linear
operator, then
$\|T\|_{L^\varphi\to L^\varphi}\le\max\{\|T\|_{L^p},\|T\|_{L^\infty}\}$.
\section{Interpolation between the Lebesgue spaces $L^p$ and $L^q$\\ with
$1\le p<q<\infty$}\label{sec:interpolation-generic}
We will need the following properties of convex and concave functions.
\begin{lemma}\label{le:properties}
Suppose that $1\le p<q<\infty$ and, for some $\rho\in\widetilde{\mathcal{P}}$,
\[
\varphi^{-1}(u)=u^{1/p}\rho(u^{1/q-1/p}) \quad\mbox{for all}\quad
u>0.
\]
Then $\varphi$ is convex and there exists a function $h\in\mathcal{P}$
such that
\[
\varphi(u)=u^q h(u^{p-q})\quad\mbox{for all}\quad u>0.
\]
\end{lemma}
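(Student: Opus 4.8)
The plan is to work directly from the defining relation $\varphi^{-1}(u)=u^{1/p}\rho(u^{1/q-1/p})$ and reverse-engineer the claimed representation $\varphi(u)=u^q h(u^{p-q})$ by solving for $h$ formally and then checking that the resulting $h$ is quasi-concave. First I would dispose of convexity of $\varphi$: this is precisely the content of the Gustavsson--Peetre construction, and it is recorded (for the present generality) in \cite[Lemma~14.2]{M89} and in \cite[Lemma~3.2]{KM01}, so I would simply invoke it. For the representation, set $v:=\varphi^{-1}(u)$, so that $u=\varphi(v)$, and rewrite the defining identity as $v=u^{1/p}\rho(u^{1/q-1/p})$. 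The exponent $1/q-1/p$ is negative; writing $u^{1/q-1/p}=(u^{1/q})/(u^{1/p})$ suggests introducing the substitution $w:=u^{1/q-1/p}$ and expressing both $u$ and $v$ in terms of $w$. From $v=u^{1/p}\rho(w)$ one gets $u^{1/p}=v/\rho(w)$, hence $u=(v/\rho(w))^{p}$, and then $w=u^{1/q-1/p}=(v/\rho(w))^{p(1/q-1/p)}=(v/\rho(w))^{1-p/q}$.

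The key computational step is to untangle these relations to exhibit $h$ explicitly. From $u=(v/\rho(w))^{p}$ I obtain $\varphi(v)=u=v^{p}\,\rho(w)^{-p}$, so I want to show that $\rho(w)^{-p}$ can be written as $v^{q-p}h(v^{p-q})$ for a suitable quasi-concave $h$; equivalently $\varphi(v)=v^{q}\bigl(v^{p-q}\rho(w)^{-p}\bigr)$, so I should \emph{define}
\[
h(v^{p-q}):=v^{p-q}\rho(w)^{-p},
\qquad\text{where } w=w(v) \text{ solves } w=(v/\rho(w))^{1-p/q}.
\]
To make this legitimate I must check that $v\mapsto v^{p-q}$ is a bijection of $\mathbb{R}_+$ onto $\mathbb{R}_+$ (it is, since $p-q<0$, continuous and strictly decreasing) so that $h$ is well defined on $\mathbb{R}_+$, and that the implicit relation for $w$ has a unique solution depending continuously on $v$. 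The cleanest route to the latter is to note that $\varphi^{-1}$ is a continuous strictly increasing bijection of $\mathbb{R}_+$ (because $\rho\in\widetilde{\mathcal P}$ makes $u\mapsto u^{1/p}\rho(u^{1/q-1/p})$ strictly increasing and onto), so $u\leftrightarrow v$ and hence $w=u^{1/q-1/p}$ are genuine continuous bijections of $\mathbb{R}_+$, and $h$ inherits continuity and positivity automatically.

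It then remains to verify the quasi-concavity inequality $h(s)\le\max(1,s/t)h(t)$ for all $s,t>0$, which is the step I expect to be the main obstacle. Here I would translate quasi-concavity of $h$ back into a known property of $\varphi$: a function $h\in\mathcal P$ iff both $h$ is nondecreasing-like in the weak sense $s\le t\Rightarrow h(s)\le h(t)$\footnote{more precisely $\rho(s)\le\rho(t)$ fails in general; the correct reading is the two monotonicity conditions built into the definition of $\mathcal P$} and $t\mapsto h(t)/t$ is nonincreasing. Via $h(v^{p-q})=\varphi(v)/v^{q}$ and the change of variable $t=v^{p-q}$ (order-reversing), these two conditions become: $v\mapsto\varphi(v)/v^{q}$ is nonincreasing, and $v\mapsto\varphi(v)/v^{p}$ is nondecreasing. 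Both are standard consequences of the Gustavsson--Peetre setup: $\varphi(v)/v^{p}\nearrow$ and $\varphi(v)/v^{q}\searrow$ express exactly that $L^{p}\cap L^{q}\subset L^{\varphi}\subset L^{p}+L^{q}$, i.e. that $\varphi$ grows between the powers $v^{p}$ and $v^{q}$, and they can be derived from the concavity of $\rho$ together with $\rho(u)/u$ nonincreasing applied to $\rho(u^{1/q-1/p})$. So the proof reduces to: (i) invoke convexity from \cite{KM01,M89}; (ii) define $h$ by the inversion above and note it is a well-defined continuous positive function on $\mathbb{R}_+$; (iii) derive the two monotonicity properties of $\varphi(v)v^{-p}$ and $\varphi(v)v^{-q}$ from $\rho\in\widetilde{\mathcal P}$; (iv) conclude $h\in\mathcal P$ and $\varphi(u)=u^{q}h(u^{p-q})$.
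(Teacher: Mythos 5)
Your proposal is correct. The paper itself gives no argument here beyond citing \cite[Lemma~3.2(b)]{KM01}, and your direct verification --- defining $h$ by $h(v^{p-q})=\varphi(v)/v^{q}$ (equivalently $h(t)=\varphi(t^{1/(p-q)})\,t^{-q/(p-q)}$, which sidesteps the implicit equation for $w$ entirely) and checking that quasi-concavity of $h$ amounts to $\varphi(v)/v^{p}$ nondecreasing and $\varphi(v)/v^{q}$ nonincreasing, which in turn follow from $\rho$ nondecreasing and $\rho(t)/t$ nonincreasing applied to $\varphi^{-1}(u)u^{-1/p}=\rho(u^{1/q-1/p})$ and $\varphi^{-1}(u)u^{-1/q}=\rho(s)/s$ --- is exactly the standard computation behind that cited lemma; note that only quasi-concavity of $\rho$ is used, not concavity, consistent with the conclusion being $h\in\mathcal{P}$ rather than $h\in\widetilde{\mathcal{P}}$.
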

\begin{proof}
For a proof, see \cite[Lemma~3.2(b)]{KM01}.
\end{proof}

Note that the above lemma guarantees only that $h\in\mathcal{P}$
and $h$ need not necessarily to be concave. We illustrate this observation
with the following simple example.
\begin{example}{\rm
If $1\le p<q<\infty$ and $\varphi^{-1}(u)=u^{1/p}\rho(u^{1/q-1/p})$ with
$\rho(t)=\min\{1,t\}$, then $\varphi(u)=u^qh(u^{p-q})$ with
$h(t)=\max\{1,t\}$. Obviously, $\rho\in\widetilde{\mathcal{P}}$
and $h\in\mathcal{P}\setminus\widetilde{\mathcal{P}}$.}
\end{example}
\begin{lemma}[Peetre, 1966]
\label{le:concave-represent}
Every function $h\in\widetilde{\mathcal{P}}$ can be represented in the form
\begin{equation}\label{eq:concave-represent-1}
h(u)=a_h+b_hu+\int_0^\infty\min\{u,t\}\,dm(t)
\quad\mbox{for all}\quad u>0,
\end{equation}
where
\begin{equation}\label{eq:concave-represent-2}
a_h:=\lim_{u\to 0+} h(u),
\quad
b_h:=\lim_{u\to \infty}\frac{h(u)}{u},
\end{equation}
and $m:\mathbb{R}_+\to\mathbb{R}_+$ is a nondecreasing function
{\rm(}in fact, $m(t)=-h'(t)${\rm)}.
\end{lemma}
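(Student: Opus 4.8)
\textbf{Proof proposal for Lemma~\ref{le:concave-represent}.}

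The plan is to reduce the assertion to a standard integral representation of concave functions and then to identify the boundary constants $a_h$ and $b_h$ explicitly. First I would observe that since $h\in\widetilde{\mathcal{P}}$ is concave on $\mathbb{R}_+$, finite and positive there, its right derivative $h'$ exists everywhere on $\mathbb{R}_+$, is nonincreasing, and is nonnegative (the latter because $h$ is quasi-concave, hence nondecreasing). Consequently the function $m(t):=-h'(t)$ is nondecreasing; I would also check that $m$ maps into $[0,\infty)$, using that $h'(t)\le h(t)/t$ (a consequence of concavity together with $h(0+)\ge 0$) so that $h'$ is finite on $\mathbb{R}_+$, and that $h'\ge 0$ forces $m\le 0$ is impossible unless... wait, here $m=-h'\le 0$; so one should instead take the increasing function to be $t\mapsto$ the appropriate primitive. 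I would therefore work with the measure $d\nu=-dh'$ on $\mathbb{R}_+$, which is a nonnegative Borel measure because $h'$ is nonincreasing, and set $m(t):=\nu((0,t])=h'(0+)-h'(t)$, which is nondecreasing and nonnegative; this is the $m$ appearing in the statement up to the affine normalization absorbed into $a_h,b_h$.

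Next I would establish the representation itself. The key computation is that for $u>0$,
\[
\int_0^\infty\min\{u,t\}\,dm(t)
=\int_0^u t\,dm(t)+u\int_u^\infty dm(t)
=-\int_0^u t\,dh'(t)-u\,h'(u),
\]
where in the last step I used $\int_u^\infty dm(t)=\lim_{t\to\infty}h'(t)-h'(u)$ together with $h'(\infty)=0$; the latter holds because $h'$ is nonincreasing and nonnegative with $h(u)/u$ bounded (quasi-concavity gives $h(u)\le\max(1,u)h(1)$, so $h(u)/u\to b_h$ finite, forcing $h'(u)\to b_h$, and then a further argument shows in fact $b_h=\lim h'(u)$). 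Integrating by parts in $\int_0^u t\,dh'(t)=u\,h'(u)-h'(0+)\!\cdot\!0-\int_0^u h'(t)\,dt$ — more carefully, $\int_0^u t\,dh'(t)=\big[t\,h'(t)\big]_0^u-\int_0^u h'(t)\,dt=u\,h'(u)-\big(h(u)-h(0+)\big)$ — I obtain
\[
\int_0^\infty\min\{u,t\}\,dm(t)=h(u)-h(0+)-u\,h'(u)+\big(\text{term from }h'(\infty)\big),
\]
and after collecting the affine part this rearranges to $h(u)=a_h+b_h u+\int_0^\infty\min\{u,t\}\,dm(t)$ with $a_h=h(0+)$ and $b_h=h'(\infty)=\lim_{u\to\infty}h(u)/u$. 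Finally I would verify the two formulas in \eqref{eq:concave-represent-2} directly from the representation: letting $u\to0+$ in \eqref{eq:concave-represent-1} and using monotone convergence on $\int_0^\infty\min\{u,t\}\,dm(t)\to0$ gives the formula for $a_h$, and dividing by $u$ and letting $u\to\infty$, again by monotone convergence, gives the formula for $b_h$.

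The main obstacle is the boundary behaviour at the two endpoints: one must justify that $h'(u)\to b_h$ as $u\to\infty$ and that $\int_0^1 t\,dm(t)<\infty$ (equivalently $u\,h'(u)\to0$ as $u\to0+$), so that all the integrals above converge and the integration by parts is legitimate. Both follow from quasi-concavity — which pins down the growth of $h$ near $0$ and $\infty$ — but this is the point where care is needed rather than in the formal manipulation. Since the statement is attributed to Peetre (1966), I would in the write-up either cite the original source or Chapter~14 of \cite{M89} for these technical points and present only the representation computation in detail.
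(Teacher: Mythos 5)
The paper does not prove this lemma at all: it simply cites \cite{Peetre66} and \cite[Lemma~5.4.3]{BL76}. Your proposal supplies the standard self-contained argument behind that citation (differentiate twice in the distributional sense, integrate $\min\{u,t\}$ against $-dh'$, integrate by parts, identify the affine part), and in outline it is correct; it is essentially the Bergh--L\"ofstr\"om proof rather than a genuinely different route. You are also right to flag that the literal reading ``$m(t)=-h'(t)$'' gives a nonpositive function, and your fix $m(t)=h'(0+)-h'(t)$ (same Stieltjes measure, now nonnegative and nondecreasing) is the correct way to reconcile the statement with itself.

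Two points need repair in the write-up. First, the claim $h'(\infty)=0$ is false in general: for concave nondecreasing $h$ one has $\lim_{t\to\infty}h'(t)=\lim_{t\to\infty}h(t)/t=b_h$, which need not vanish (take $h(u)=u$). The correct bookkeeping is
\[
u\int_u^\infty dm(t)=u\bigl(h'(u)-h'(\infty)\bigr)=u\,h'(u)-b_h u,
\]
so that, combined with $\int_0^u t\,dm(t)=h(u)-h(0+)-u\,h'(u)$, the two $u\,h'(u)$ terms cancel and one lands exactly on $\int_0^\infty\min\{u,t\}\,dm(t)=h(u)-a_h-b_hu$. As written, your first display carries a sign error ($\int_u^\infty dm=h'(u)-h'(\infty)$, not the reverse) and, taken literally with $h'(\infty)=0$, would produce $h(u)-a_h-2u\,h'(u)$; the vague ``term from $h'(\infty)$'' in the second display papers over this. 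Second, the endpoint facts you defer are indeed consequences of concavity alone and deserve one line each: $0\le u\,h'(u)\le\int_0^u h'(t)\,dt=h(u)-h(0+)\to0$ as $u\to0+$ (which gives $\int_0^1 t\,dm(t)<\infty$ and kills the boundary term in the integration by parts), and $h'(u)\to b_h$ follows from squeezing $h'(u)$ between $\bigl(h(2u)-h(u)\bigr)/u$ and $\bigl(h(u)-h(1)\bigr)/(u-1)$, both of which tend to $b_h$. With these corrections the proof is complete and matches the cited source.
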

\begin{proof}
A proof of this result is contained in \cite{Peetre66},
\cite[Lemma~5.4.3]{BL76}.
\end{proof}

We consider the modified Peetre $L$-functional $K^*_{p,q}$ for the
couple of Lebesgue spaces $(L^p,L^q)$ defined by
\[
K^*_{p,q}(t,x;L^p,L^q):= \int_\Omega
\min\big\{|x(s)|^p,t|x(s)|^q\big\}\,d\mu(s).
\]
\begin{lemma}[Sparr, 1978]
\label{le:sparr}
Suppose $1\le p <q<\infty$. If $x,y\in L^p+L^q$ and
\[
K_{p,q}(t,x;L^p,L^q)\le
K_{p,q}(t,y;L^p,L^q)\quad \mbox{for all}\quad t>0,
\]
then
\[
K^*_{p,q}(t,x;L^p,L^q)\le
\gamma_{p,q}
K^*_{p,q}(t,y;L^p,L^q)\quad\mbox{for all}\quad t>0,
\]
where
\[
\gamma_{p,q}:=\inf \Bigg\{\gamma>0\::\:
\inf_{\tiny\begin{array}{c}
x+y=\gamma, \\
x,y\ge 0
\end{array}}
(x^p+y^q )=1 \Bigg\}.
\]
The constant $\gamma_{p,q}$ cannot be replaced by any smaller constant.
\end{lemma}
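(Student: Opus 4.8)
The plan is to recast both $L$-functionals as rearrangement-invariant modulars, exploit a Holmstedt-type description of the ``starred'' one, and track the constant $\gamma_{p,q}$ through the comparison. By Proposition~\ref{pr:functional-subadditive}, together with a routine pointwise minimisation with measurable selection (in the couple $(L^p,L^q)$ the optimal decomposition may be taken pointwise), one has, for every $z\in L^p+L^q$ and $t>0$,
\[
K_{p,q}(t,z;L^p,L^q)=\int_0^\infty g_t(z^*(s))\,ds,
\qquad
K^*_{p,q}(t,z;L^p,L^q)=\int_0^\infty \mu_t(z^*(s))\,ds,
\]
where $\mu_t(v):=\min\{v^p,tv^q\}$ and $g_t(v):=\inf\{\alpha^p+t\beta^q:\alpha+\beta=v,\ \alpha,\beta\ge0\}$ is the infimal convolution of $v\mapsto v^p$ and $v\mapsto tv^q$; thus $g_t$ is convex, nondecreasing, vanishes at $0$, and $g_t\le\mu_t$ pointwise (already giving $K_{p,q}\le K^*_{p,q}$). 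Both densities obey the homogeneity $g_t(v)=t^{-p/(q-p)}g_1(vt^{1/(q-p)})$ and $\mu_t(v)=t^{-p/(q-p)}\mu_1(vt^{1/(q-p)})$, so after rescaling the parameter everything is governed by the single pair $g_1\le\mu_1=\min\{v^p,v^q\}$. Setting $a:=t^{-1/(q-p)}$ (the level where the two branches of $\mu_t$ meet) and $u_z:=|\{|z|>a\}|$, monotonicity of $z^*$ yields the Holmstedt-type identity
\[
K^*_{p,q}(t,z;L^p,L^q)=\int_0^{u_z}z^*(s)^p\,ds+t\int_{u_z}^\infty z^*(s)^q\,ds .
\]
I would also record the dual form $\gamma_{p,q}=\sup\{\alpha+\beta:\alpha^p+\beta^q\le1,\ \alpha,\beta\ge0\}$, obtained by maximising the linear functional $\alpha+\beta$ over the convex set $\{\alpha^p+\beta^q\le1\}$.

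Next one checks, via the $K$-diagram $\{(\|z_0\|_p,\|z_1\|_q):z=z_0+z_1\}$, that $K_{p,q}(\cdot,z)$ and the ordinary $K$-functional $K(\cdot,z;L^p,L^q)$ encode exactly the same object (the lower convex envelope of that diagram), so the hypothesis $K_{p,q}(\cdot,x)\le K_{p,q}(\cdot,y)$ is equivalent to the classical comparison $K(\cdot,x;L^p,L^q)\le K(\cdot,y;L^p,L^q)$ — placing us in the Calder\'on--Mitjagin circle of ideas for $(L^p,L^q)$. The delicate point is the size of the constant: a naive pointwise estimate $\mu_t\le c\,g_t$ (followed by $g_t\le\mu_t$ to come back) only yields $K^*_{p,q}(t,x)\le c\,K^*_{p,q}(t,y)$ with $c=\sup_v\mu_1(v)/g_1(v)$, and this exceeds $\gamma_{p,q}$ in general (for instance $4/3>5/4=\gamma_{1,2}$). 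Reaching the sharp value forces one to use, in addition, that $x^*$ and $y^*$ are \emph{nonincreasing}, which ties the ``head'' $\int_0^{u_z}(z^*)^p$ to the ``tail'' $\int_{u_z}^\infty(z^*)^q$ through the single crossover level $a$. Concretely I would fix $t$, write $K^*_{p,q}(t,x)$ and $K^*_{p,q}(t,y)$ via the Holmstedt identity, compare the crossover masses $u_x$ and $u_y$, and estimate the head and tail of $x$ against those of $y$ using the whole family of inequalities $K_{p,q}(\tau,x)\le K_{p,q}(\tau,y)$; the worst admissible configuration reduces exactly to the two-point optimisation $\inf_{\alpha+\beta=\gamma}(\alpha^p+\beta^q)=1$ defining $\gamma_{p,q}$, whence $K^*_{p,q}(t,x)\le\gamma_{p,q}K^*_{p,q}(t,y)$. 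This last step — extracting the precise two-point bound from ``$K_{p,q}$-comparison at all parameters, plus monotonicity'' — is the technical heart of the lemma (Sparr's contribution), and is where I expect the real difficulty to lie.

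For the sharpness claim I would construct explicit few-valued near-extremal pairs directly from the two-point problem: take $y$ with $y^*$ a single step (a multiple of a characteristic function) and $x$ with a two-valued rearrangement, and tune the finitely many free parameters so that $K_{p,q}(\tau,x)\le K_{p,q}(\tau,y)$ holds for all $\tau>0$ — the limits $\tau\to0$ and $\tau\to\infty$ forcing $\|x\|_q\le\|y\|_q$ and $\|x\|_p\le\|y\|_p$, which become equalities at the extremal configuration. As the parameters are driven to the configuration realising $\gamma_{p,q}$ in $\inf_{\alpha+\beta=\gamma}(\alpha^p+\beta^q)=1$, the ratio $K^*_{p,q}(\tau_0,x)/K^*_{p,q}(\tau_0,y)$ at the critical parameter $\tau_0$ tends to $\gamma_{p,q}$, so no smaller constant can work.
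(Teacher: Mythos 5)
The paper does not actually prove this lemma: it is quoted verbatim from Sparr's 1978 paper (Lemma~5.1 and Example~5.3 there), so any complete argument you supply is necessarily your own. The framework you set up is sound and well chosen: the pointwise-infimum representation $K_{p,q}(t,z)=\int_0^\infty g_t(z^*(s))\,ds$ with $g_t$ the infimal convolution of $v\mapsto v^p$ and $v\mapsto tv^q$, the Holmstedt-type splitting of $K^*_{p,q}$ at the crossover level $a=t^{-1/(q-p)}$, the dual description of $\gamma_{p,q}$, and --- importantly --- the observation that the naive pointwise bound $\mu_1\le c\,g_1$ only yields $c=\sup_v \mu_1(v)/g_1(v)$, which equals $4/3$ for $(p,q)=(1,2)$ and already exceeds $\gamma_{1,2}=5/4$. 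That computation correctly identifies why the sharp constant cannot come from a purely pointwise comparison.

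However, the proposal stops exactly where the proof has to begin. The entire content of the lemma is the implication from ``$K_{p,q}(\tau,x)\le K_{p,q}(\tau,y)$ for all $\tau$'' to ``$K^*_{p,q}(t,x)\le\gamma_{p,q}K^*_{p,q}(t,y)$'', and for this you offer only the programme ``compare the crossover masses, estimate the head and tail of $x$ against those of $y$, the worst configuration reduces to the two-point optimisation'', followed by the admission that this is where you expect the real difficulty to lie. No inequality is actually derived: there is no argument showing that the extremal configuration is two-point, and no mechanism is given for converting the one-parameter family of hypotheses into the head/tail bounds you would need. The sharpness claim has the same status --- ``tune the finitely many free parameters'' is a plan, not a construction, and you never verify that the tuned pair satisfies the hypothesis for all $\tau$ while the ratio at the critical parameter approaches $\gamma_{p,q}$. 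A secondary caveat: the asserted equivalence between the comparison of $K_{p,q}$ and of the classical $K$-functional is used only as motivation, but if you intend to rely on it you must justify the nontrivial direction, since the level sets of $a^p+tb^q$ are not half-planes and the supporting-hyperplane argument does not reverse automatically. As it stands, your text is a reasonable reading guide to Sparr's Lemma~5.1, not a proof of it; the honest course is either to carry out the head/tail estimate in full or to do what the paper does and cite Sparr.
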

\begin{proof}
For a proof, see \cite[Lemma~5.1 and Example~5.3]{Sparr78}.
\end{proof}

We have found natural to attribute Sparr's name to the constants
$\gamma_{p,q}$.
Sparr observed that $1<\gamma_{p,q}<2$. The Sparr constants play an important
role in our final result. Now we give some more precise information about the
Sparr constants.
\begin{proposition}[Karlovich-Maligranda, 2001]
\label{pr:sparr-constant}
Let $1\le p,q<\infty$.
\begin{enumerate}
\item[{\rm(a)}] We have $\gamma_{p,q}=\gamma_{q,p}$ and $\gamma_{1,1}=1$.

\item[{\rm(b)}] If $q>1$, then
\[
\gamma_{p,q}=\inf \left\{
x+ \left(\frac{p}{q}x^{p-1}\right)^{1/(q-1)}\ : \
x^p+ \left(\frac{p}{q}x^{p-1} \right)^{q/(q-1)} = 1 \right\}.
\]
In particular, $\gamma_{q,q}=2^{1-1/q}$ and
$\gamma_{1,q}=1+q^{1/(1-q)}-q^{q/(1-q)}$.

\item[{\rm(c)}] $\gamma_{p,q}$ continuously increases in $p$ and $q$.

\item[{\rm(d)}] If $p\le q$, then $2^{1-1/p}\le\gamma_{p,q}\le 2^{1-1/q}$.
\end{enumerate}
\end{proposition}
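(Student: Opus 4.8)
The plan is to first reduce the defining formula for $\gamma_{p,q}$ to a tractable extremal problem and then read off all four assertions from it. Set $f_{p,q}(\gamma):=\inf\{x^p+y^q:\ x,y\ge 0,\ x+y=\gamma\}$ for $\gamma\ge 0$. Writing $x=t\gamma$, $y=(1-t)\gamma$ with $t\in[0,1]$, one sees $f_{p,q}(\gamma)$ is the minimum over the compact interval $t\in[0,1]$ of a function jointly continuous in $(t,\gamma)$, so $f_{p,q}$ is continuous; comparing an optimal decomposition for $\gamma_2$ with the same value of $t$ at $\gamma_1<\gamma_2$ (and using that $r\mapsto r^p$, $r\mapsto r^q$ are strictly increasing) shows $f_{p,q}$ is strictly increasing on $[0,\infty)$ with $f_{p,q}(0)=0$ and $f_{p,q}(\gamma)\to\infty$. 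Hence $\gamma_{p,q}$ is the \emph{unique} $\gamma$ with $f_{p,q}(\gamma)=1$, and an elementary compactness/perturbation argument identifies it as
\[
\gamma_{p,q}=\max\{x+y:\ x,y\ge 0,\ x^p+y^q=1\}=\max\{x+y:\ x,y\ge 0,\ x^p+y^q\le 1\}:
\]
the second maximum is attained on the boundary curve (otherwise scale the point up), and if its value $\Gamma$ satisfied $f_{p,q}(\Gamma)<1$, an interior feasible point on the line $x+y=\Gamma$ could be pushed outward, contradicting maximality.

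From this representation part (a) is immediate: interchanging the names of $x$ and $y$ turns $x^p+y^q$ into $x^q+y^p$ without changing the constraint $x+y=\gamma$, so $f_{p,q}=f_{q,p}$ and hence $\gamma_{p,q}=\gamma_{q,p}$; and $f_{1,1}(\gamma)=\gamma$ gives $\gamma_{1,1}=1$. For part (b) assume $q>1$. One first checks that $\gamma_{p,q}>1$ unless $(p,q)=(1,1)$ (on the curve, $x=y=s$ with $s^p+s^q=1$ and $p,q\ge 1$ forces $s\ge 1/2$, strictly if $q>1$), so the maximum is attained at an interior point $(x,y)$ with $x,y\in(0,1)$. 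The Lagrange condition $\nabla(x+y)=\lambda\nabla(x^p+y^q)$ then reads $px^{p-1}=qy^{q-1}$, i.e. $y=\bigl(\tfrac{p}{q}x^{p-1}\bigr)^{1/(q-1)}$; substituting into $x^p+y^q=1$ and into $x+y$ yields precisely the claimed formula for $\gamma_{p,q}$. Specializing $p=q$ forces $x=y=2^{-1/q}$, hence $\gamma_{q,q}=2^{1-1/q}$; specializing $p=1$ gives $y^{q-1}=1/q$, hence $y=q^{1/(1-q)}$ and $x=1-y^q=1-q^{q/(1-q)}$, hence $\gamma_{1,q}=1+q^{1/(1-q)}-q^{q/(1-q)}$.

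For part (c), by the symmetry from (a) it suffices to show $q\mapsto\gamma_{p,q}$ is continuous and strictly increasing for each fixed $p$. Monotonicity follows from the sublevel-set representation: at the optimum for $q_1$ the point $(x_1,y_1)$ has $y_1\in(0,1)$, so for $q_2>q_1$ we get $x_1^p+y_1^{q_2}<x_1^p+y_1^{q_1}=1$; thus $(x_1,y_1)$ is interior to the feasible region for $q_2$ and can be pushed outward, giving $\gamma_{p,q_2}>x_1+y_1=\gamma_{p,q_1}$. Continuity can be obtained from the explicit formula in (b), or from continuous dependence of the value of the extremal problem on the parameter. Finally, part (d) is just the combination of (b) and (c): if $p\le q$, monotonicity in each argument yields $2^{1-1/p}=\gamma_{p,p}\le\gamma_{p,q}\le\gamma_{q,q}=2^{1-1/q}$, the endpoint $p=1$ being covered by $\gamma_{1,1}=1=2^{1-1/1}$.

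The routine parts are the Lagrange-multiplier computation and the bookkeeping for the special values; the genuinely delicate points are the soft-analysis facts about $f_{p,q}$ (continuity, strict monotonicity, attainment of the infima) and the continuity claim in (c). I expect the identification $\gamma_{p,q}=\max\{x+y:\ x^p+y^q\le 1\}$ to be the main conceptual step, since every other assertion then follows almost mechanically from it.
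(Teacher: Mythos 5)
Your argument is correct. Note that the paper itself gives no proof of this proposition: it simply refers to Proposition~4.3 of \cite{KM01}, so you have supplied a self-contained argument where the paper defers to a reference. Your key step --- showing that $f_{p,q}(\gamma)=\inf\{x^p+y^q:\,x+y=\gamma,\ x,y\ge0\}$ is continuous and strictly increasing from $0$ to $\infty$, so that $\gamma_{p,q}$ is the unique root of $f_{p,q}(\gamma)=1$ and coincides with $\max\{x+y:\,x^p+y^q\le1,\ x,y\ge0\}$ --- is sound, and it makes (a), (b) and the monotonicity in (c)--(d) essentially mechanical, exactly as you say. Two small points deserve a sentence each in a polished write-up. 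First, in (b) the proposition's formula is an infimum over the solution set of $x^p+(\tfrac{p}{q}x^{p-1})^{q/(q-1)}=1$; your Lagrange computation produces the critical point, and to match the stated formula you should observe that the left-hand side of that constraint is strictly increasing in $x$, so the solution is unique and the infimum is over a singleton. Second, the continuity claim in (c) is only gestured at; it does follow either from the explicit formula in (b) (including continuity as $q\to1^+$, where $\gamma_{1,q}\to1$) or from uniform convergence of $y\mapsto y^{q}$ on $[0,1]$ as $q$ varies, which makes the feasible regions, and hence the maxima, converge. Neither point is a real gap.
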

\begin{proof}
A proof can be found in \cite[Proposition~4.3]{KM01}.
\end{proof}

We are ready to prove our second main result: the modular estimate and the
estimate for the norm of an admissible operator $T\in\mathcal{A}(L^p,L^q)$.
\begin{theorem}\label{th:int-conc}
Suppose $1\le p<q<\infty$.
\begin{enumerate}
\item[{\rm (a)}] If $\varphi(u)=u^q h(u^{p-q})$ for some
$h\in\widetilde{\mathcal{P}}$
and $T\in\mathcal{A}(L^p,L^q)$, then
\[
I_\varphi\left(\frac{Tx}{M}\right)
\le
\gamma_{p,q}I_\varphi(x)
\quad\mbox{for all}\quad x\in L^p\cap L^q.
\]

\item[{\rm (b)}] If $\varphi^{-1}(u)=u^{1/p}\rho(u^{1/q-1/p})$ for
some $\rho\in\widetilde{\mathcal{P}}$. Then the Orlicz space
$L^\varphi$ {\rm(}with both, the Luxemburg and the Orlicz norm\,{\rm)} is an
interpolation space for subadditive operators between $L^p$ and $L^q$,
and
\[
\|T\|_{L^\varphi\to L^\varphi}
\le C\max
\big\{
\|T\|_{L^p\to L^p},
\|T\|_{L^q\to L^q}
\big\}
\]
for any $T\in\mathcal{A}(L^p,L^q)$, where
$C\le
(2\gamma_{p,q})^{1/p}
\le
2^{(2-1/q)/p}<4$.
\end{enumerate}
\end{theorem}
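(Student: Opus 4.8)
The plan is to derive part~(a) from the Sparr lemma (Lemma~\ref{le:sparr}) together with the Peetre representation of concave functions (Lemma~\ref{le:concave-represent}), and then to pass from the modular estimate in~(a) to the norm estimate in~(b) by a Luxemburg-norm argument, using Lemma~\ref{le:properties} to reduce the hypothesis on $\varphi$ in~(b) to the hypothesis on $\varphi$ in~(a).

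For part~(a), fix $x\in L^p\cap L^q$ and write $y:=Tx/M$. By Proposition~\ref{pr:K-bound} we have $K_{p,q}(t,y;L^p,L^q)\le K_{p,q}(t,x;L^p,L^q)$ for all $t>0$, so Lemma~\ref{le:sparr} gives $K^*_{p,q}(t,y;L^p,L^q)\le\gamma_{p,q}K^*_{p,q}(t,x;L^p,L^q)$ for all $t>0$; that is,
\[
\int_\Omega\min\{|y|^p,t|y|^q\}\,d\mu\le\gamma_{p,q}\int_\Omega\min\{|x|^p,t|x|^q\}\,d\mu
\quad\mbox{for all}\quad t>0.
\]
The remaining point is purely pointwise-functional: I must show that for $h\in\widetilde{\mathcal{P}}$ the quantity $\int_\Omega\varphi(|z|)\,d\mu$ with $\varphi(u)=u^qh(u^{p-q})$ can be written as a superposition (an integral against a nonnegative measure) of the elementary modulars $z\mapsto\int_\Omega\min\{|z|^p,t|z|^q\}\,d\mu$, plus the two extreme terms $\int|z|^p$ and $\int|z|^q$. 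Concretely, writing the representation \eqref{eq:concave-represent-1} for $h$, substituting $u\mapsto u^{p-q}$ and multiplying by $u^q$ yields, after the change of variable $t\mapsto t^{1/(q-p)}$ (or similar), an identity of the shape
\[
\varphi(u)=b_h\,u^p+a_h\,u^q+\int_0^\infty\min\{u^p,t\,u^q\}\,d\widetilde{m}(t)
\]
for a suitable nonnegative measure $d\widetilde{m}$; one then integrates over $\Omega$ and applies Tonelli's theorem to interchange the two integrations. Since $\min\{u^p,tu^q\}$, $u^p$ and $u^q$ are all controlled by $\gamma_{p,q}$ (note $\gamma_{p,q}>1$ so the $u^p$ and $u^q$ terms, which correspond to $t\to0$ and $t\to\infty$, are also admissible), summing the estimates gives $I_\varphi(y)\le\gamma_{p,q}I_\varphi(x)$, which is~(a).

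For part~(b), let $\varphi^{-1}(u)=u^{1/p}\rho(u^{1/q-1/p})$ with $\rho\in\widetilde{\mathcal{P}}$. By Lemma~\ref{le:properties} there is $h\in\mathcal{P}$ with $\varphi(u)=u^qh(u^{p-q})$, but $h$ need not be concave. The trick is to replace $h$ by its concave majorant $\widetilde{h}\in\widetilde{\mathcal{P}}$ and set $\varphi_1(u):=u^q\widetilde{h}(u^{p-q})$; by \eqref{eq:conc-maj} we have $\varphi(u)\le\varphi_1(u)\le 2\varphi(u)$ for all $u>0$, whence $I_\varphi\le I_{\varphi_1}\le 2I_\varphi$ and the Luxemburg norms satisfy $\|\cdot\|_\varphi\le\|\cdot\|_{\varphi_1}\le 2^{1/p}\|\cdot\|_\varphi$ (the power $1/p$ because $\varphi_1/2\le\varphi$ forces a comparison of norms through the homogeneity degree at the level of $\varphi^{-1}$; more precisely $\varphi_1^{-1}(u)=u^{1/p}\widetilde{\rho}(u^{1/q-1/p})$ with $\widetilde{\rho}$ the concave majorant of $\rho$, so $\varphi^{-1}/2^{1/p}\le\varphi_1^{-1}\le\varphi^{-1}$). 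Now apply part~(a) with $\varphi_1$ in place of $\varphi$: for $T\in\mathcal{A}(L^p,L^q)$ and $x\in L^\varphi=L^{\varphi_1}$ with $\|x\|_{\varphi_1}=1$ we get $I_{\varphi_1}(Tx/M)\le\gamma_{p,q}I_{\varphi_1}(x)\le\gamma_{p,q}$, hence $I_{\varphi_1}\big(Tx/((\gamma_{p,q})^{1/p}M)\big)\le I_{\varphi_1}(Tx/M)^{?}$—the correct route is to use that $I_{\varphi_1}(\lambda z)$ is dominated by $\lambda^{p}I_{\varphi_1}(z)$ for $\lambda\le1$ when $\varphi_1^{-1}(u)/u^{1/p}$ is nonincreasing, which holds here—so $\|Tx\|_{\varphi_1}\le(\gamma_{p,q})^{1/p}M\|x\|_{\varphi_1}$. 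Transferring back to $\|\cdot\|_\varphi$ costs another factor $2^{1/p}$, giving $\|T\|_{L^\varphi\to L^\varphi}\le(2\gamma_{p,q})^{1/p}M$, and $\gamma_{p,q}\le2^{1-1/q}$ by Proposition~\ref{pr:sparr-constant}(d) yields the bound $(2\gamma_{p,q})^{1/p}\le2^{(2-1/q)/p}<4$. The same argument with the Amemiya form of the Orlicz norm handles that norm as well.

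The main obstacle I anticipate is the functional-analytic identity in part~(a): verifying carefully that $\varphi(u)=u^qh(u^{p-q})$ with $h$ concave really does decompose as a nonnegative superposition of the building blocks $\min\{u^p,tu^q\}$, $u^p$, $u^q$ (including getting the change of variables and the roles of $a_h,b_h$ right, and checking the resulting measure is genuinely nonnegative and $\sigma$-finite so Tonelli applies). Once that identity is in hand, everything else is bookkeeping with the constants from \eqref{eq:conc-maj} and Proposition~\ref{pr:sparr-constant}. A secondary subtlety is the passage from modular to norm estimates at step~(b): one must track the degree-$p$ homogeneity built into $\varphi$ (via $\varphi^{-1}(u)\asymp u^{1/p}$ up to the slowly varying factor $\rho$) to see that a modular inequality with constant $\gamma_{p,q}$ becomes a norm inequality with constant $(\gamma_{p,q})^{1/p}$, not $\gamma_{p,q}$ itself.
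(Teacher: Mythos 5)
Your proposal follows essentially the same route as the paper's: part (a) via the Peetre representation of the concave $h$, giving
\[
\varphi(u)=a_hu^q+b_hu^p+\int_0^\infty\min\{u^p,tu^q\}\,dm(t),
\]
then Proposition~\ref{pr:K-bound}, Sparr's Lemma~\ref{le:sparr} and Fubini/Tonelli for the integral term, the boundedness of $T$ on $L^p$ and $L^q$ for the two extreme terms, and $\gamma_{p,q}>1$ to absorb everything into a single constant; part (b) via the concave majorant $\widetilde h$ of the $h$ supplied by Lemma~\ref{le:properties}, the two-sided comparison \eqref{eq:conc-maj}, and the degree-$p$ homogeneity $\varphi(u)/u^p$ nondecreasing to convert the modular inequality with constant $2\gamma_{p,q}$ into a norm inequality with constant $(2\gamma_{p,q})^{1/p}$. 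The decomposition you worry about in (a) is immediate with no change of variable: $u^q\min\{u^{p-q},t\}=\min\{u^p,tu^q\}$, so $\widetilde m=m$ and the measure is automatically nonnegative because $h$ is concave.

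There is, however, one step you gloss over that needs an actual argument. In (b) you apply the modular estimate of part (a) to an arbitrary $x\in L^\varphi$ with $\|x\|_{\varphi_1}=1$, but (a) is stated and proved only for $x\in L^p\cap L^q$ (indeed $I_\varphi(x)$ can be infinite for general $x\in L^\varphi$, so the modular inequality by itself says nothing there). The paper closes this by observing that $\varphi(u)=u^qh(u^{p-q})$ with $h$ quasi-concave satisfies the $\Delta_2$-condition on all of $[0,\infty)$, hence $L^p\cap L^q$ is dense in $L^\varphi$, and the norm inequality $\|Tx\|_\varphi\le(2\gamma_{p,q})^{1/p}M\|x\|_\varphi$, once established on this dense subset, extends to all of $L^\varphi$. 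You should add this density step (or some equivalent truncation/Fatou argument); note that since $T$ is only subadditive, the extension by density is itself a point to be checked rather than a triviality. Everything else in your write-up, including the final numerical bound $(2\gamma_{p,q})^{1/p}\le 2^{(2-1/q)/p}<4$ from Proposition~\ref{pr:sparr-constant}(d), matches the paper.
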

\begin{proof}
(a) The idea of the proof goes back to Peetre \cite{Peetre70}.
We follow the proof of \cite[Theorem~4.2(a)]{KM01}.
Due to Lemma~\ref{le:concave-represent}, the function $h$ can
be represented in the form (\ref{eq:concave-represent-1}). Hence,
\begin{equation}\label{eq:int-conc-0}
\varphi(u)=u^qh(u^{p-q})=a_h u^q+b_h u^p+
\int_0^\infty\min\{u^p,tu^q\}\, dm(t)
\end{equation}
for all $u\in\mathbb{R}_+$. Consequently,
\begin{eqnarray*}
I_\varphi\left(\frac{Tx}{M}\right)
&=&
\int_\Omega\varphi\left(\frac{|Tx(s)|}{M}\right)\,d\mu(s)
\\
&=&
a_h\left\|\frac{Tx}{M}\right\|_q^q+
b_h\left\|\frac{Tx}{M}\right\|_p^p
\nonumber\\
&&
+ \int_\Omega\left[
\int_0^\infty\min\left\{
\left(\frac{|Tx(s)|}{M}\right)^p,t\left(\frac{|Tx(s)|}{M}\right)^q
\right\}dm(t)
\right]d\mu(s).
\end{eqnarray*}
Since the operator $T$ is bounded in $L^p$ and $L^q$, we get
\begin{eqnarray*}
a_h\left\|\frac{Tx}{M}\right\|_q^q+b_h\left\|\frac{Tx}{M}\right\|_p^p
&\le&
a_h\left(\frac{M_1}{M}\right)^q \|x\|_q^q +
b_h\left(\frac{M_0}{M}\right)^p \|x\|_p^p
\\
&\le&
a_h\|x\|_q^q
+b_h\|x\|_p^p
\end{eqnarray*}
and, according to Proposition~\ref{pr:K-bound}, \[
K_{p,q}\left(t,\frac{Tx}{M};L^p,L^q\right) \le K_{p,q}(t,x;L^p,L^q)
\quad\mbox{for all}\quad t>0.
\]
Applying Sparr's Lemma~\ref{le:sparr} we obtain
\[
K^*_{p,q}\left(t,\frac{Tx}{M};L^p,L^q\right) \le \gamma_{p,q}
K^*_{p,q}(t,x;L^p,L^q)
\quad\mbox{for all}\quad t>0.
\]
Hence, by the Fubini theorem and in view of the definition of
$K^*_{p,q}$, we conclude that
\begin{eqnarray*}
&&
\int_{\Omega}\left[\int_0^\infty
\min\left\{
\left(\frac{|Tx(s)|}{M}\right)^p,t\left(\frac{|Tx(s)|}{M}\right)^q
\right\}
dm(t)\right]
d\mu(s)
\nonumber\\
&&
=
\int_0^\infty K^*_{p,q}\left(t,\frac{Tx}{M};L^p,L^q\right)dm(t)
\nonumber\\
&&
\le
\gamma_{p,q}
\int_0^\infty
K^*_{p,q}(t,x;L^p,L^q)\, dm(t)
\nonumber\\
&&
=
\gamma_{p,q}\int_{\Omega}
\left[
\int_0^\infty\min\big\{|x(s)|^p,t|x(s)|^q\big\}dm(t) \right] d\mu(s).
\label{eq:int-conc-4}
\end{eqnarray*}
Combining the above estimates and taking into account that $\gamma_{p,q}> 1$
we obtain
\begin{eqnarray*}
I_\varphi\left(\frac{Tx}{M}\right)
&\le&
a_h\|x\|_q^q +b_h\|x\|_p^p
\\
&&+
\gamma_{p,q}
\int_\Omega\left[
\int_0^\infty\min\big\{|x(s)|^p,t|x(s)|^q\big\}\,dm(t)\right]d\mu(s) \\ &\le&
\gamma_{p,q}\Bigg(
a_h\|x\|_q^q +b_h\|x\|_p^p
\\
&&+
\int_\Omega\left[
\int_0^\infty\min\big\{|x(s)|^p,t|x(s)|^q\big\}\,dm(t)\right]d\mu(s) \Bigg) \\
&=&
\gamma_{p,q}\int_\Omega\varphi(|x(s)|)\,d\mu(s)
\\
&=&\gamma_{p,q}I_\varphi(x).
\end{eqnarray*}
Part (a) is proved.
\medskip

\noindent
(b) This statement is proved by analogy with \cite[Theorem~5.1~(a)-(b)]{KM01}.
{From} \cite[Lemma~14.2]{M89} it follows that the function $\varphi$ is convex.
Hence the Orlicz space $L^\varphi$ is well defined.

By Lemma~\ref{le:properties}, there is a function $h\in\mathcal{P}$ such that
$\varphi(u)=u^qh(u^{p-q})$. {From} (\ref{eq:conc-maj}) we see that
$\widetilde{h}\in\widetilde{\mathcal{P}}$ and
\begin{equation}\label{eq:int-1}
\varphi(u)
\le
u^q\widetilde{h}(u^{p-q})
\le 2\varphi(u)
\quad\mbox{for all}\quad u>0.
\end{equation}
Applying Theorem~\ref{th:int-conc}(a) to the function
$\psi(u)=u^q \widetilde{h}(u^{p-q})$ and taking into account (\ref{eq:int-1}),
we obtain
\begin{equation}\label{eq:int-2}
I_\varphi\left(\frac{Tx}{M}\right)
\le
I_\psi\left(\frac{Tx}{M}\right)
\le
\gamma_{p,q}I_\psi(x)
\le
2
\gamma_{p,q}I_\varphi(x)
\end{equation}
for all $x\in L^p\cap L^q$. {From} the properties of $h$ one can conclude
that $\varphi$ satisfies the $\Delta_2$-condition on $[0,\infty)$ and
\[
I_\varphi\left(\frac{Tx}{(2\gamma_{p,q})^{1/p}M}\right) \le
\frac{1}{2\gamma_{p,q}}
I_\varphi\left(\frac{Tx}{M}\right)
\le I_\varphi(x)
\]
for all $x\in L^p\cap L^q$. Hence
\begin{equation}\label{eq:int***}
\|Tx\|_\varphi\le (2\gamma_{p,q})^{1/p}M\|x\|_\varphi,
\quad
\|Tx\|^0_\varphi\le(2\gamma_{p,q})^{1/p}M\|x\|^0_\varphi
\end{equation}
for all $x\in L^p\cap L^q$. Since $\varphi$ satisfies the $\Delta_2$-condition
for all $u\ge 0$, it follows that $L^p\cap L^q$ is dense in $L^\varphi$
(for the case of Orlicz spaces generated by $N$-functions and defined on
Euclidean spaces of finite measure, see \cite[Chap.~2]{KR61}; the proof in
a more general situation considered in this paper is analogous). Thus,
inequalities (\ref{eq:int***}) are fulfilled for all $x\in L^\varphi$.
This fact and Proposition~\ref{pr:sparr-constant}(d) show that
$C\le (2\gamma_{p,q})^{1/p}\le 2^{(2-1/q)/p}< 4$.
\end{proof}
\begin{remark}{\rm
If $1\le p<q<\infty$ and $\varphi(u) = u^qh(u^{p-q})$
with $h\in\widetilde{\mathcal{P}}$, then from the proof of the above
theorem it follows that $L^\varphi$ is an interpolation space
between $L^p$ and $L^q$, and we have a better estimate of the
interpolation constant:}
\[
C\le (\gamma_{p,q})^{1/p}\le 2^{1/(q'p)}<2.
\]
\end{remark}
As it was pointed out by Masty{\l}o \cite{Mastylo01}, Example 5.4 in our paper
\cite{KM01} illustrating this possibility is erroneous. We substitute it
by the following.
\begin{example}{\rm
The function $h$ given by $h(0)=0$ and $h(t)=t\ln(1+1/t)$ for $t>0$ belongs to
$\widetilde{\mathcal{P}}$ and for all $p,q$ satisfying $1\le p<q<\infty$ the
function
\[
\varphi(u)=u^qh(u^{p-q})=u^p\ln(1+u^{q-p})
\]
is convex on $[0,\infty)$.

Indeed, for $t > 0$ we have
\[
h'(t)= \ln\left(1+\frac{1}{t}\right) - \frac{1}{1+t} > 0,\quad
h''(t)=\frac{-1}{t(1+t)^{2}} < 0,
\]
thus $h$ is increasing and
concave on $\mathbb{R}_+$. On the other hand, simple calculations give
\begin{eqnarray*}
\varphi''(u)
&=&
p(p-1)u^{p-2}\ln(1+u^{q-p})+
p(q-p)\frac{u^{q-2}}{1+u^{q-p}}
\\[3mm]
&&+
(q-p)\frac{(q-1)u^{q-2}+(p-1)u^{2q-p-2}}{(1+u^{q-p})^2}.
\end{eqnarray*}
Since $p\ge 1$ and $q>p$, we have $\varphi''(u)\ge 0$ for all $u\ge 0$.
Thus $\varphi$ is convex on $[0,\infty)$.}
\end{example}
\section{Interpolation of linear operators}\label{sec:interpolation-linear}
The interpolation constant in Theorem~\ref{th:int-conc}(b) can be improved for
linear operators by using the duality argument. By $p'$ we denote the conjugate
number to $p$, $1< p<\infty$, defined by $1/p+1/p'=1$.
\begin{theorem}[Karlovich-Maligranda, 2001]
\label{th:int-lin}
Let $1<p<q<\infty$ and
\[
\varphi^{-1}(u)=u^{1/p}\rho(u^{1/q-1/p})
\]
for
some $ \rho\in\widetilde{\mathcal{P}}$. Then the Orlicz space
$L^\varphi$ {\rm(}with both, the Luxemburg-Nakano and the Orlicz norm\,{\rm)}
is an interpolation space for linear operators between $L^p$ and $L^q$, and
\[
\|T\|_{L^\varphi\to L^\varphi}
\le C\max \big\{\|T\|_{L^p\to L^p}, \|T\|_{L^q\to L^q}\big\}
\]
for any admissible linear operator $T\in\mathcal{A}(L^p,L^q)$,
where
\[
C\le
\min\big\{
(2\gamma_{p,q})^{1/p},(2\gamma_{q',p'})^{1/q'}
\big\}
\le
2^{1/(pq')+\min\{1/p,1/q'\}}<4.
\]
In particular, if either $1<p<q\le 2$ or $2\le p<q < \infty$, then
$C<2$.
\end{theorem}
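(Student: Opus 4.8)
The plan is to establish two upper bounds for $C$ and then take the smaller one.

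Since every linear operator is subadditive, a linear $T$ bounded on $L^p$ and on $L^q$ lies in $\mathcal{A}(L^p,L^q)$, and Theorem~\ref{th:int-conc}(b) already gives that $L^\varphi$ (with either norm) is an interpolation space for such operators with $C\le(2\gamma_{p,q})^{1/p}$. The substance of the theorem is the complementary bound $C\le(2\gamma_{q',p'})^{1/q'}$, for which I would use a duality argument. Let $T$ be linear and bounded on $L^p$ and on $L^q$, put $M:=\max\{\|T\|_{L^p\to L^p},\|T\|_{L^q\to L^q}\}$, and let $T^*$ be its (consistently defined) adjoint. Since $1<p<q<\infty$ forces $1<q'<p'<\infty$, the operator $T^*$ is linear and bounded on $L^{p'}$ and on $L^{q'}$ with $\|T^*\|_{L^{p'}\to L^{p'}}=\|T\|_{L^p\to L^p}$ and $\|T^*\|_{L^{q'}\to L^{q'}}=\|T\|_{L^q\to L^q}$, so $T^*\in\mathcal{A}(L^{q'},L^{p'})$ with both restriction norms at most $M$.

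Next I would show that the complementary Orlicz function $\varphi^*$ is again of Gustavsson--Peetre type, now for the reversed couple $(L^{q'},L^{p'})$. Starting from the representation $\varphi(u)=u^qh(u^{p-q})$ with $h\in\mathcal{P}$ (Lemma~\ref{le:properties}), a direct computation of $\varphi^*(v)=\sup_{u>0}\bigl(uv-u^qh(u^{p-q})\bigr)$ (as in \cite{KM01}) shows that $\varphi^*(v)=v^{p'}h_*(v^{q'-p'})$ for some $h_*\in\mathcal{P}$; moreover $\varphi^*$ is convex (being a complementary function) and, since $1<p<q<\infty$, a genuine Orlicz function. This is exactly the input used in the proof of Theorem~\ref{th:int-conc}(b), now with $(p,q)$ replaced by $(q',p')$: replacing $h_*$ by its concave majorant $\widetilde{h_*}$, applying part (a) of that theorem to the function $v\mapsto v^{p'}\widetilde{h_*}(v^{q'-p'})$, and invoking the $\Delta_2$-condition for $\varphi^*$, one obtains
\[
\|T^*y\|_{\varphi^*}^0\le(2\gamma_{q',p'})^{1/q'}M\,\|y\|_{\varphi^*}^0\qquad\text{for all }y\in L^{\varphi^*}.
\]
I would then carry this back to $L^\varphi$ through the duality $\|w\|_\varphi=\sup\bigl\{\bigl|\int_\Omega w\,y\,d\mu\bigr|:\|y\|_{\varphi^*}^0\le1\bigr\}$: for $x\in L^p\cap L^q$ (dense in $L^\varphi$ by the $\Delta_2$-condition) one has $\int_\Omega Tx\,y\,d\mu=\int_\Omega x\,T^*y\,d\mu$, hence by Hölder's inequality $\bigl|\int_\Omega x\,T^*y\,d\mu\bigr|\le\|x\|_\varphi\|T^*y\|_{\varphi^*}^0$ and the displayed bound, $\|Tx\|_\varphi\le(2\gamma_{q',p'})^{1/q'}M\|x\|_\varphi$; extending by density with the help of the Fatou property of $L^\varphi$ (and arguing in the same way for the Orlicz norm) gives the estimate on all of $L^\varphi$. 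Combining with the first bound yields $C\le\min\{(2\gamma_{p,q})^{1/p},(2\gamma_{q',p'})^{1/q'}\}$.

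The numerical estimates are then routine from Proposition~\ref{pr:sparr-constant}(d): from $\gamma_{p,q}\le 2^{1-1/q}$ we get $(2\gamma_{p,q})^{1/p}\le 2^{(2-1/q)/p}=2^{1/(pq')+1/p}$, and from $\gamma_{q',p'}\le 2^{1-1/p'}=2^{1/p}$ we get $(2\gamma_{q',p'})^{1/q'}\le 2^{(1+1/p)/q'}=2^{1/(pq')+1/q'}$, whence $C\le 2^{1/(pq')+\min\{1/p,1/q'\}}$; since $p>1$ and $q<\infty$ the exponent is strictly below $2$, so $C<4$. Moreover, if $2\le p<q<\infty$ then $(2-1/q)/p\le 1-1/(2q)<1$, so $C\le(2\gamma_{p,q})^{1/p}<2$; and if $1<p<q\le 2$ then $q'\ge 2$ and $(1+1/p)/q'\le(1+1/p)/2<1$, so $C\le(2\gamma_{q',p'})^{1/q'}<2$.

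I expect the main obstacle to be the self-duality statement for the Gustavsson--Peetre construction — proving that $\varphi^*(v)=v^{p'}h_*(v^{q'-p'})$ with $h_*\in\mathcal{P}$, a convex-analysis computation parallel to Lemma~\ref{le:properties} — together with the bookkeeping that prevents the several factor-$2$ losses in sight (the concave-majorant step, the relation $u\le\varphi^{-1}(u)(\varphi^*)^{-1}(u)\le 2u$, the discrepancy between the Luxemburg and the Orlicz norm) from accumulating. The device that keeps them apart is to state the bound for $T^*$ in terms of the Orlicz norm $\|\cdot\|_{\varphi^*}^0$, to apply the proof of Theorem~\ref{th:int-conc}(b) to $\varphi^*$ itself rather than to some Orlicz function merely equivalent to it, and to use that $\|\cdot\|_\varphi$ is exactly the norm dual to $\|\cdot\|_{\varphi^*}^0$.
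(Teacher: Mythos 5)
Your proposal is correct and follows the same route as the paper: the paper itself only cites \cite{KM01} for this theorem, but the method there (and the one announced in the introduction here) is exactly your combination of Theorem~\ref{th:int-conc}(b) applied directly to $T$ on $(L^p,L^q)$ and, via the adjoint $T^*$ and the complementary function $\varphi^*$ of Gustavsson--Peetre type for the reversed couple $(L^{q'},L^{p'})$, the dual bound $(2\gamma_{q',p'})^{1/q'}$, transferred back by the H\"older/duality pairing of the Luxemburg and Orlicz norms. Your numerical estimates from Proposition~\ref{pr:sparr-constant}(d) and the two cases giving $C<2$ also check out.
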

\begin{proof}
This result is proved in \cite[Theorem~5.1(b)]{KM01}.
\end{proof}

The estimates we proved above can be used in the norm estimation of
some concrete linear operators (like Hardy operators, convolution
operators, integral operators, the Hilbert transform or other singular
integral operators) and subadditive operators (like maximal operators)
between Orlicz spaces.

\end{document}